\newtheorem{thm}{Theorem}
\newtheorem{cor}{Corollary}
\newtheorem{prop}{Proposition}
\newtheorem{example}{Example}
\begin{document}
{
\begin{center}
{\Large\bf
A refined polar decomposition for $J$-unitary operators.}
\end{center}
\begin{center}
{\bf S.M. Zagorodnyuk}
\end{center}

\section{Introduction.}

Last years $J$-symmetric, $J$-skew-symmetric and $J$-unitary operators attracted still more and more attention of researches,
see, e.g. ~\cite{cit_100_GP}, \cite{cit_200_GP2}, \cite{cit_300_Z}, \cite{cit_400_LZ} and references therein.
Recall that a conjugation $J$ in a Hilbert space $H$ is an antilinear operator on $H$ such that $J^2 x = x$, $x\in H$,
and
$$ (Jx,Jy)_H = (y,x)_H,\qquad x,y\in H. $$
The conjugation $J$ generates the following bilinear form:
\begin{equation}
\label{f1_1}
[x,y]_J := (x,Jy)_H,\qquad x,y\in H.
\end{equation}
A linear operator $A$ in $H$ is said to be $J$-symmetric ($J$-skew-symmetric) if
\begin{equation}
\label{f1_2}
[Ax,y]_J = [x,Ay]_J,\qquad x,y\in D(A),
\end{equation}
or, respectively,
\begin{equation}
\label{f1_3}
[Ax,y]_J = -[x,Ay]_J,\qquad x,y\in D(A).
\end{equation}
A linear operator $A$ in $H$ is said to be $J$-isometric if
\begin{equation}
\label{f1_3_1}
[Ax,Ay]_J = [x,y]_J,\qquad x,y\in D(A).
\end{equation}
A linear operator $A$ in $H$ is called $J$-self-adjoint ($J$-skew-self-adjoint, or $J$-unitary) if
\begin{equation}
\label{f1_4}
A = JA^*J,
\end{equation}
or
\begin{equation}
\label{f1_5}
A = -JA^*J,
\end{equation}
or
\begin{equation}
\label{f1_6}
A^{-1} = JA^*J,
\end{equation}
respectively.

A refined polar decomposition for complex symmetric operators was obtained by Garcia and Putinar in~\cite{cit_200_GP2}.
Using  the technique of Garcia and Putinar, an analog for complex skew-symmetric operators was obtained by Li and Zhou in~\cite[Lemma 2.3]{cit_400_LZ}.
In this paper, we shall characterize the components of the polar decomposition for an arbitrary $J$-unitary operator.
This characterization has a quite different structure as that of the above-mentioned decompositions.
For the case of a bounded $J$-unitary operator, a similar decomposition was obtained in~\cite[Theorem 3.2]{cit_300_Z}.
However, in the unbounded case we can not use arguments from~\cite{cit_300_Z}.

\noindent
A linear operator $A$ in a Hilbert space $H$ is said to be \textit{$J$-imaginary} (\textit{$J$-real}) if $f\in D(A)$ implies $Jf\in D(A)$ and
$AJf = -JAf$ (respectively $AJf = JAf$), where $J$ is a conjugation on $H$.
We shall answer a question of the existence of $J$-imaginary self-adjoint extensions of $J$-imaginary symmetric operators.
This subject is similar to the study of
$J$-real self-adjoint extensions of $J$-real symmetric operators, see~\cite{cit_530_Stone}.
However, we can not state that a $J$-imaginary symmetric operator has equal defect numbers.
Nevertheless, it is shown that a $J$-imaginary self-adjoint extension of a $J$-imaginary symmetric operator exists
in a possibly larger Hilbert space.

\textbf{Notations.}
As usual, we denote by $\mathbb{R}, \mathbb{C}, \mathbb{N}, \mathbb{Z}, \mathbb{Z}_+$,
the sets of real numbers, complex numbers, positive integers, integers and non-negative integers,
respectively; $\mathbb{R}_e = \mathbb{C}\backslash \mathbb{R}$. By $I_d$ we denote the unit matrix of order $d$; $d\in \mathbb{N}$.
By $\mathfrak{B}(S)$ we mean a set of all Borel subsets of $S\subseteq \mathbb{C}$.
If H is a Hilbert space then $(\cdot,\cdot)_H$ and $\| \cdot \|_H$ mean
the scalar product and the norm in $H$, respectively.
Indices may be omitted in obvious cases.
For a linear operator $A$ in $H$, we denote by $D(A)$
its  domain, by $R(A)$ its range, and $A^*$ means the adjoint operator
if it exists. If $A$ is invertible then $A^{-1}$ means its
inverse. $\overline{A}$ means the closure of the operator, if the
operator is closable.
If $A=A^*$, then $\mathcal{R}_z(A) := (A-zE_H)^{-1}$, $z\in \mathbb{R}_e$.
For a set $M\subseteq H$
we denote by $\overline{M}$ the closure of $M$ in the norm of $H$.
By $E_H$ we denote the identity operator in $H$, i.e. $E_H x = x$,
$x\in H$. In obvious cases we may omit the index $H$.

\section{Properties of $J$-unitary operators.}

The following proposition accumulates some basic properties of $J$-unitary operators.

\begin{prop}
\label{p2_1}
Let $J$ be a conjugation on a Hilbert space $H$, and  $A$ be a $J$-unitary operator in $H$.
Then the following statements are true:

\begin{itemize}

\item[(i)] $A$ is closed;

\item[(ii)] $A^{-1}$ is $J$-unitary;

\item[(iii)] $A^*$ is $J$-unitary;


\item[(iv)] $A^*A$ is $J$-unitary;

\item[(v)] If $A$ is bounded, then $D(A)=R(A)=H$.

\end{itemize}
\end{prop}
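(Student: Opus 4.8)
The plan is to deduce all five items from two purely algebraic rewritings of the defining relation~(\ref{f1_6}) together with the standard calculus of closed densely defined operators. Note first that~(\ref{f1_6}) tacitly assumes $A$ densely defined (so that $A^*$ exists) and injective (so that $A^{-1}$ exists). Since $J^2=E_H$ and $J$ is a surjective isometry of $H$ (in particular a homeomorphism carrying dense sets onto dense sets), applying $J\,\cdot\,J$ to both sides of~(\ref{f1_6}) gives $A^*=JA^{-1}J$, and, since $A^{-1}$ and hence $A^*$ is injective, passing to inverses gives $(A^*)^{-1}=JAJ$. Besides these I will use, as equalities of operators, the bookkeeping facts: $JTJ$ is closed iff $T$ is; $(JTJ)^{-1}=JT^{-1}J$ and $(ST)^{-1}=T^{-1}S^{-1}$ for injective operators; $(A^{-1})^*=(A^*)^{-1}$ when $A$ is injective, densely defined, with dense range; and $A^{**}=A$ when $A$ is closed and densely defined.

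For (i): $A^*$ is always closed, hence $A^{-1}=JA^*J$ is closed, hence $A$ is closed, because the graph of $A$ is the image of the graph of $A^{-1}$ under the coordinate flip $(u,v)\mapsto(v,u)$. From (i), $A$ is closed and densely defined, so $D(A^*)$ is dense and $A^{**}=A$; and comparing domains in~(\ref{f1_6}) gives $R(A)=D(A^{-1})=D(JA^*J)=JD(A^*)$, which is therefore dense as well. For (iii): $A^*$ is densely defined and injective, and $(A^*)^{-1}=JAJ=JA^{**}J=J(A^*)^*J$, which is precisely the assertion that $A^*$ is $J$-unitary. For (ii): $A$ is injective, densely defined, with dense range, so $(A^{-1})^*=(A^*)^{-1}=JAJ=J(A^{-1})^{-1}J$, i.e.\ $A^{-1}$ is $J$-unitary.

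For (iv): $A^*A$ is self-adjoint by the classical theorem that $A^*A$ is self-adjoint for a closed densely defined $A$ (so it is densely defined and $(A^*A)^*=A^*A$), and it is injective since $(A^*Ax,x)=\|Ax\|^2$ and $A$ is injective. Hence, using $(A^*A)^{-1}=A^{-1}(A^*)^{-1}$, the two rewritings above, and $J^2=E_H$, we get $(A^*A)^{-1}=(JA^*J)(JAJ)=JA^*AJ=J(A^*A)^*J$, so $A^*A$ is $J$-unitary. For (v): if $A$ is bounded then, being closed, its domain is closed; since $D(A)$ is also dense, $D(A)=H$, so $A$ is everywhere defined and bounded, and hence so is $A^*$ with $D(A^*)=H$; then $R(A)=JD(A^*)=JH=H$.

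I expect the only real subtlety to be the unbounded-operator domain bookkeeping: the identities $A^{**}=A$, $(A^{-1})^*=(A^*)^{-1}$ and $(A^*A)^{-1}=A^{-1}(A^*)^{-1}$ are genuine equalities of operators (equal domains and equal action), not merely formal ones, only under the closedness and density hypotheses. Consequently the logical order is forced — establish closedness first, then derive density of $D(A^*)$ and of $R(A)$, and only then run the adjoint/inverse manipulations; skipping this would leave gaps precisely in the unbounded case, which is the case of interest here.
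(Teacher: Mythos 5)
Your proof is correct and follows essentially the same route as the paper: all five items are obtained by algebraic manipulation of $A^{-1}=JA^*J$ together with the standard adjoint/inverse calculus for closed densely defined operators ($(A^{-1})^*=(A^*)^{-1}$, $A^{**}=A$, $(A^*A)^{-1}=A^{-1}(A^*)^{-1}$), with (iv) being exactly the computation $J(A^*A)^*J=(JA^*J)(JAJ)=(A^*A)^{-1}$ and (v) following from closedness plus boundedness. The only difference is that the paper cites the identities $A^{-1}=JA^*J=(JAJ)^*$ and $(JA^*J)^{-1}=J(A^*)^{-1}J$ from Propositions 2.8 and 2.10 of~\cite{cit_300_Z}, whereas you verify these bookkeeping facts (and the attendant domain/density issues) directly, which makes your argument self-contained.
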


\begin{proof}
Let $A$ be a $J$-unitary operator in a Hilbert space $H$.
By~\cite[Proposition 2.8]{cit_300_Z} we may write: $A^{-1} = J A^* J = (JAJ)^*$, and therefore $A^{-1}$ and $A$ are closed.
Moreover, by~\cite[Proposition 2.10]{cit_300_Z} we get
$$ ( A^{-1} )^{-1} = (J A^* J)^{-1} = J (A^*)^{-1} J = J (A^{-1})^* J, $$
and therefore $A^{-1}$ is $J$-unitary.
Since
$$ (A^*)^{-1} = (A^{-1})^* = JAJ, $$
then $A^*$ is $J$-unitary.
Set $G = A^* A$. The operator $G$ is non-negative and we may write:
$$ JG^* J = JA^* AJ = JA^* J JAJ = A^{-1} (A^*)^{-1} = (A^* A)^{-1}. $$
Thus, $G$ is $J$-unitary.

If $A$ is bounded, then $A^{-1} = J A^* J$ is bounded, and the closeness of $A$ and $A^{-1}$ implies \textit{(v)}.

\end{proof}

Now we can obtain a refined polar decomposition for a $J$-unitary operator.

\begin{thm}
\label{t2_1}
Let $J$ be a conjugation on a Hilbert space $H$. Then the following assertions hold:

\begin{itemize}

\item[1)]
If $A$ is a $J$-unitary operator in $H$ then
\begin{equation}
\label{f2_1}
A = UB,
\end{equation}
where $U$ is a unitary $J$-real operator, and $B$ is a non-negative self-adjoint $J$-unitary operator;

\item[2)]
If an operator $A$ in $H$ admits a representation~(\ref{f2_1}) with
a unitary $J$-real operator $U$, and a non-negative self-adjoint $J$-unitary operator $B$,
then $A$ is $J$-unitary.
\end{itemize}
\end{thm}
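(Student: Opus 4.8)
The plan is to obtain the decomposition in assertion~1) from the classical polar decomposition of $A$, and to verify assertion~2) by a direct computation with the defining relation~(\ref{f1_6}).

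For assertion~1), first note that $A$ is densely defined (this is implicit in~(\ref{f1_6}), since $A^*$ must exist), closed and injective by Proposition~\ref{p2_1}; moreover $A^{-1}$ is $J$-unitary by Proposition~\ref{p2_1}(ii), hence also densely defined. Let $A = U|A|$ be the polar decomposition, where $|A| = (A^*A)^{1/2}$ is non-negative self-adjoint with $D(|A|) = D(A)$ and $U$ is a partial isometry. Its initial subspace is $\overline{R(|A|)} = (\ker|A|)^\perp = (\ker A)^\perp = H$, and its final subspace is $\overline{R(A)} = \overline{D(A^{-1})} = H$; therefore $U$ is unitary. Set $B := |A| = (A^*A)^{1/2}$.

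Next I would show that $B$ is $J$-unitary. By Proposition~\ref{p2_1}(iv) the non-negative self-adjoint operator $G := A^*A$ is $J$-unitary, so $JGJ = JG^*J = G^{-1}$. Using the standard properties of a conjugation (namely that $JTJ$ is non-negative self-adjoint whenever $T$ is, and $(JTJ)^* = JT^*J$), the operator $JBJ$ is non-negative self-adjoint and $(JBJ)^2 = JB^2J = JGJ = G^{-1}$; by uniqueness of the non-negative self-adjoint square root, $JBJ = G^{-1/2} = B^{-1}$, i.e. $JB^*J = B^{-1}$, so $B$ is $J$-unitary. To see that $U$ is $J$-real, write $A^{-1} = (UB)^{-1} = B^{-1}U^*$ and $A^* = (UB)^* = BU^*$ (legitimate since $U$ is bounded), whence $JA^*J = (JBJ)(JU^*J) = B^{-1}(JU^*J)$. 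Comparing with $A^{-1} = JA^*J$, cancelling the injective operator $B^{-1}$ and using density of $D(A^{-1})$ together with the boundedness of $U$, we get $U^* = JU^*J$; taking adjoints yields $U = JUJ$, i.e. $UJ = JU$, so $U$ is $J$-real.

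For assertion~2), suppose $A = UB$ with $U$ unitary $J$-real and $B$ non-negative self-adjoint $J$-unitary. Then $B^{-1} = JB^*J = JBJ$, and $B$ is injective, so $A$ is injective with $A^{-1} = B^{-1}U^*$; also $A^* = (UB)^* = BU^*$. Since $U$ is $J$-real, $UJ = JU$, hence $JU^*J = U^*$, and therefore
\[
JA^*J = (JBJ)(JU^*J) = B^{-1}U^* = A^{-1},
\]
so $A$ is $J$-unitary. One should in addition check that the domains on the two sides of each identity above coincide, which follows by tracking them through the computations.

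The main obstacle is the step that $B = (A^*A)^{1/2}$ is $J$-unitary: this is not a formal manipulation but rests on the uniqueness of the non-negative self-adjoint square root, applied to $JBJ$ and $G^{-1/2}$. Beyond that, the chief technical care needed is in handling the domains of the unbounded operators $A$, $A^{-1}$, $B$, $B^{-1}$, so that identities such as $A^{-1} = B^{-1}U^*$ and $(UB)^* = BU^*$ are genuine operator equalities and so that the partial isometry $U$ is correctly seen to have full initial and final subspaces.
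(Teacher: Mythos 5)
Your proof is correct, and its overall skeleton coincides with the paper's: take the polar decomposition $A=U|A|$, use Proposition~\ref{p2_1} to see that $G=A^*A$ is $J$-unitary, deduce that $B=|A|$ is $J$-unitary, extract $J$-realness of $U$ by comparing $A^{-1}=JA^*J$ with the factored expressions and cancelling $B^{-1}$ on a dense set, and verify assertion~2) by a direct computation with $(UB)^*=BU^*$. The one genuine difference is how the key lemma $J|A|J=|A|^{-1}$ is obtained. The paper proves it by explicit functional-calculus work: a resolvent computation showing that $JE_G(\cdot)J$ is the spectral measure of $G^{-1}$ and that $J|A|J=\sqrt{G^{-1}}$ (formula~(\ref{f2_1_5})), followed by a change-of-variable argument in the spectral integrals giving $\sqrt{G^{-1}}=(\sqrt{G})^{-1}$ (formula~(\ref{f2_1_7})). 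You instead note that $JBJ$ is non-negative self-adjoint with $(JBJ)^2=JGJ=G^{-1}$, and invoke uniqueness of the non-negative self-adjoint square root; strictly speaking this identifies $JBJ$ with $(G^{-1})^{1/2}$, and the remaining identification $(G^{-1})^{1/2}=(G^{1/2})^{-1}=B^{-1}$ (which you pass over a little quickly) follows from the same uniqueness principle, since $B^{-1}$ is non-negative self-adjoint with $(B^{-1})^2=(B^2)^{-1}=G^{-1}$. Your route is shorter and more abstract, trading the paper's self-contained spectral-measure manipulations for the standard uniqueness theorem; both are valid, and your domain bookkeeping (injectivity and dense range of $A$, unitarity of the partial isometry, $(UB)^*=B^*U^*$ for bounded $U$) matches what the paper uses implicitly.
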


\begin{proof}
Let $A$ be a $J$-unitary operator in a Hilbert space $H$.
Set $G = A^*A$, and let $E_G(\delta)$, $\delta\in \mathfrak{B}(\mathbb{R})$ be the spectral measure of $G$.
By Proposition~\ref{p2_1} we conclude that $G$ is $J$-unitary.
Let us check that \textit{$E(\delta) := J E_G(\delta) J$, $\delta\in \mathfrak{B}(\mathbb{R})$ is the spectral measure of $G^{-1}$}.
In fact, $E(\delta)$ satisfies conditions $E^2 = E$, $E^* = E$, therefore $E(\delta)$ is a projection operator. The strong
$\sigma$-additivity of $E$ follows from the strong $\sigma$-additivity of $E_G$ and the continuity of $J$.
Moreover, $E(\mathbb{R}) = J E_G(\mathbb{R}) J = E_H$.
Thus, $E$ is a spectral measure. Denote by $T$ the corresponding to $E$ self-adjoint operator in $H$.
Observe that
$$ \mathcal{R}_z(G^{-1}) = (G^{-1} - zE_H)^{-1} = (JG^*J - J\overline{z}J)^{-1} $$
$$ = J (G^* - \overline{z} E_H)^{-1} J = J \mathcal{R}_z^*(G) J,\qquad z\in \mathbb{R}_e. $$
For arbitrary $f,g\in H$, $z\in \mathbb{R}_e$, we may write:
$$ ( \mathcal{R}_z(G^{-1}) f,g ) = (J \mathcal{R}_z^*(G) J f,g) = (\mathcal{R}_z(G) J g, Jf) $$
$$ = \int \frac{1}{s-z} d(E_G(s) Jg, Jf) = \int \frac{1}{s-z} d(E(s) f, g) =  ( \mathcal{R}_z(T) f,g ). $$
Therefore $T=G^{-1}$.
Notice that
$$ \mathcal{R}_z (J|A|J) = (J|A|J - zE_H)^{-1} = (J (|A| - \overline{z}E_H) J)^{-1}=
J \mathcal{R}_z^* (|A|) J,\quad z\in \mathbb{R}_e. $$
For arbitrary $f,g\in H$, $z\in \mathbb{R}_e$, we may write:
$$ (\mathcal{R}_z (J|A|J) f,g) = (J \mathcal{R}_z^* (|A|) J f,g) =
(\mathcal{R}_z (|A|) Jg,Jf) $$
$$ = \int \frac{1}{\sqrt{s}-z} d(E_G Jg,Jf) = \int \frac{1}{\sqrt{s}-z} d(E f,g) =
\left( \int \frac{1}{\sqrt{s}-z} dE f, g \right) $$
$$ = (\mathcal{R}_z (\sqrt{G^{-1}}) f,g). $$
Therefore
\begin{equation}
\label{f2_1_5}
J |A| J = \sqrt{G^{-1}}.
\end{equation}
Let us check that
\begin{equation}
\label{f2_1_7}
\sqrt{G^{-1}} = \left( \sqrt{G} \right)^{-1}.
\end{equation}
In fact, using the change of a variable:
$$ \lambda = \pi(u) = \left\{ \begin{array}{cc} \sqrt{u}, & u\geq 0\\
u, & u<0 \end{array}\right., $$
for the spectral measure $E_G$ (see, e.g., \cite{cit_550_BS})
we obtain the spectral measure $E_{\sqrt{G}}$ of $\sqrt{G}$, and
we may write:
$$ \left( \sqrt{G} \right)^{-1} = \left( \int \sqrt{u} dE_G \right)^{-1}
= \left( \int \lambda dE_{\sqrt G} \right)^{-1} $$
\begin{equation}
\label{f2_1_9}
= \int \frac{1}{\lambda} dE_{\sqrt{G}} =
\int \frac{1}{\sqrt{u}} dE_G(u).
\end{equation}
On the other hand, using the change of a variable:
$$ \lambda = \widehat\pi(s) = \left\{ \begin{array}{cc} \frac{1}{s}, & s>0\\
s, & s\leq 0 \end{array}\right., $$
for the spectral measure $E$ of $G^{-1}$, we obtain the spectral
measure $E_G$, and
we may write
\begin{equation}
\label{f2_1_11}
\sqrt{G^{-1}} = \int \sqrt{s}  dE(s) = \int \frac{1}{ \sqrt{\lambda} } dE_G (\lambda).
\end{equation}
By~(\ref{f2_1_9}),(\ref{f2_1_11}) we conclude that relation~(\ref{f2_1_7}) holds.

\noindent
By~(\ref{f2_1_5}),(\ref{f2_1_7}) we obtain that
$J |A| J = |A|^{-1}$.
Thus, $B := |A|$ is $J$-unitary.

Consider the polar decomposition for $A$: $A = UB$, where $U$ is a unitary operator in $H$
(since $\overline{R(A)} = \overline{R(B)} = H$). Then
$A^* = B^* U^*$ (since $U$ is bounded on $H$) and
$$ UB^{-1} = (BU^{-1} )^{-1} = ( A^* )^{-1} = JAJ = JUJ JBJ $$
$$ = JUJ B^{-1}. $$
Therefore
$$ U h = JUJ h,\qquad h\in D(B). $$
By the continuity we conclude that $U$ is $J$-real.

Let us check assertion~2) of the theorem. For the operator $A$ in this case we may write:
\begin{equation}
\label{f2_5}
JAJ = JUJ JBJ = U B^{-1},
\end{equation}
\begin{equation}
\label{f2_6}
A^{-1} = B^{-1} U^{-1}.
\end{equation}
Since $U$ is bounded on $H$, we may write:
$$ J A^* J = (JAJ)^* = (UB^{-1})^* = (B^{-1})^* U^* = B^{-1} U^{-1} = A^{-1}. $$
Therefore $A$ is $J$-unitary.

\end{proof}

\begin{cor}
\label{c2_1}
Let $J$ be a conjugation on a Hilbert space $H$, and $A$ be a $J$-unitary operator in $H$.
Then operators $A^* A$ and $A A^*$ are unitarily equivalent.
\end{cor}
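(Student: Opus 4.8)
The plan is to read the required unitary equivalence straight off the refined polar decomposition supplied by Theorem~\ref{t2_1}. Write $A = UB$, where $U$ is a unitary (in particular everywhere defined and bounded) operator on $H$ and $B = |A| = \sqrt{A^*A}$ is non-negative self-adjoint. The key observation is that, because $U$ is everywhere defined and bounded with everywhere defined bounded inverse $U^{-1} = U^*$, products with $U$ on the left pass through the adjoint without loss: $A^* = (UB)^* = B^* U^* = B U^*$.

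First I would record the identity $B^2 = A^*A$, which is the defining property of the modulus in the polar decomposition of the closed, densely defined operator $A$ (equivalently, apply the squaring function to $T = (A^*A)^{1/2}$ via the spectral theorem). Next I would compute the two products with attention to domains: since $U^*U = E_H$, one gets $A^*A = B^*U^*\,U B = B^2$ on $D(A^*A) = D(B^2)$; and $AA^* = UB\cdot BU^* = U B^2 U^*$ on $D(AA^*) = U\,D(B^2)$. Combining these yields
\begin{equation}
\label{f_cor}
AA^* = U B^2 U^* = U (A^*A) U^{-1},
\end{equation}
which exhibits $A^*A$ and $AA^*$ as unitarily equivalent via $U$.

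The only subtlety is the domain bookkeeping for the unbounded operators $A^*A$, $AA^*$ and $B^2$, but this is not a genuine obstacle: since $U$ and $U^{-1}=U^*$ are everywhere defined bounded bijections, the relations $(UB)^* = B^*U^*$ and $D(U B^2 U^*) = U\,D(B^2)$ hold with full equality rather than mere inclusion, so~(\ref{f_cor}) follows in a few lines once the polar decomposition of Theorem~\ref{t2_1} is in hand.
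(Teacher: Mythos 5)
Your proposal is correct and follows essentially the same route as the paper: both read the equivalence off the polar decomposition $A=UB$ from Theorem~\ref{t2_1}, using boundedness of the unitary $U$ to write $A^*=B U^*$ and conclude $AA^* = U B^2 U^{-1} = U(A^*A)U^{-1}$. Your extra remarks on domain equality are a welcome (and accurate) elaboration of what the paper leaves implicit.
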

\begin{proof}
In the notations of Theorem~\ref{t2_1} we may write: $A^* A = B^2$, and, since $U$ is bounded,
$A A^* = UB (UB)^* = UB B^* U^* = UB B U^{-1} = U A^*A U^{-1}$.
Observe that we only used that $U$ is unitary in the polar decomposition of $A$.
\end{proof}
As it was noticed in~\cite{cit_200_GP2}, for the unilateral shift $A$ the operators $A^* A$ and $A A^*$ are not unitarily equivalent.
Thus, the unilateral shift is not $J$-unitary.

\begin{example} (An unbounded $J$-unitary operator)
\label{e2_1}
Let
$$ A_0 := A_0(\beta) := \left( \begin{array}{cc} 0 & \beta i\\
-\beta i & 0\end{array}
\right),\qquad \beta\in (-1,1). $$
Observe that
$$ (A_0(\beta) \pm I_2)^{-1} = \frac{1}{1-\beta^2}
\left( \begin{array}{cc} \pm 1 & -\beta i\\
\beta i & \pm 1\end{array}
\right). $$
Let $H = \bigoplus\limits_{k=1}^\infty H_k$, where $H_k = \mathbb{C}^2$ is the space of $2$-dimensional complex vectors,
and
$A = \bigoplus\limits_{k=1}^\infty A_0 \left( 1 - \frac{1}{k}  \right)$.
For an element of $H$ of the form $h=(h_j)_{j=1}^\infty$, $h_j = \left( \begin{array}{cc} h_{j,1} \\
h_{j,2} \end{array} \right)\in H_j$, we set
$J h = (\mathcal{J} h_j)_{j=1}^\infty$, where $\mathcal{J} h_j = \left( \begin{array}{cc} \overline{h_{j,1}} \\
\overline{h_{j,2}} \end{array} \right)$. Observe that $J$ is a conjugation on $H$.
It is straightforward to check that
$A$ is a bounded self-adjoint, $J$-skew-self-adjoint operator on $H$, and there exist $(E_H \pm A)^{-1}$.
Let $e_{k,1}$ be an element of $H$ of the form $(h_j)_{j=1}^\infty$, $h_j\in H_j$, where $h_j= \delta_{j,k}
\left( \begin{array}{cc} 1 \\
0 \end{array} \right)$; $k\in \mathbb{N}$.
Observe that
$$ (E_H+A)^{-1} e_{k,1} = \left(
\delta_{j,k}
\frac{1}{ 1 - (1-\frac{1}{k})^2 }
\left(
\begin{array}{cc} 1\\
\left( 1-\frac{1}{k} \right) i\end{array}
\right)
\right)_{j=1}^\infty. $$
Since
$$ \left( (E_H + A)^{-1} e_{k,1}, e_{k,1}  \right)_H = \frac{1}{1 - \left( 1 - \frac{1}{k}   \right)^2} \rightarrow \infty, $$
as $k\rightarrow\infty$, then $(E_H + A)^{-1}$ is unbounded.
Consider the following operator:
\begin{equation}
\label{f2_15}
V = (A + E_H) (A - E_H)^{-1} = E_H + 2 (A - E_H)^{-1}.
\end{equation}
Transformation~(\ref{f2_15}), which connects some $J$-skew-symmetric and $J$-isometric operators,
was studied by~Kamerina
in~\cite{cit_1700_K}.
Observe that
$$ V = \int \frac{\lambda+1}{\lambda-1} dE_A(\lambda), $$
where $E_A(\lambda)$ is the spectral measure of $A$. Thus, $V$ is self-adjoint, and we may write:
$$ JV^*J = JVJ = E_H + 2 J(A - E_H)^{-1}J = E_H - 2 (A + E_H)^{-1} $$
$$ = (A - E_H) (A + E_H)^{-1}  = V^{-1}. $$
Thus, $V$ is a $J$-unitary operator.
Therefore $V^{-1}$ is an unbounded $J$-unitary operator.
\end{example}

Unitary $J$-real operators, which appear in the refined polar decomposition~(\ref{f2_1}), also play an important role
in the question of an extension of $J$-imaginary symmetric operators to $J$-imaginary self-adjoint operators.

\begin{thm}
\label{t2_2}
Let $J$ be a conjugation on a Hilbert space $H$. Let $A$ be a closed $J$-imaginary symmetric operator in $H$,
$\overline{D(A)}=H$. Then
there exists a $J$-imaginary self-adjoint operator $\widetilde A\supseteq A$ in a Hilbert space
$\widetilde H\supseteq H$
(with an extension of $J$).
If the defect numbers of $A$ are equal, then there exists a $J$-imaginary self-adjoint operator $\widehat A\supseteq A$
in $H$.
\end{thm}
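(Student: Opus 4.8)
The plan is to reduce assertion~1) to assertion~2) by a doubling device, so I first describe how I would prove 2). Since $A$ is $J$-imaginary we have $JAJ=-A$ on $D(A)$, and since $(JAJ)^*=JA^*J$ (the identity used already in the proof of Proposition~\ref{p2_1}) this yields $JA^*J=-A^*$; that is, $A^*$ is $J$-imaginary as well, and in particular $JD(A^*)=D(A^*)$. Consequently, for $g\in\mathfrak N_i:=\ker(A^*-iE_H)$ the antilinearity of $J$ gives $A^*(Jg)=-JA^*g=-J(ig)=iJg$, so $J\mathfrak N_i\subseteq\mathfrak N_i$; likewise $J\mathfrak N_{-i}\subseteq\mathfrak N_{-i}$ for $\mathfrak N_{-i}:=\ker(A^*+iE_H)$. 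Thus $J_{\pm}:=J|_{\mathfrak N_{\pm i}}$ are conjugations on the defect subspaces. I want to stress that this is precisely the point where the situation differs from the $J$-real case: there $J$ interchanges $\mathfrak N_i$ and $\mathfrak N_{-i}$, which forces the defect numbers to be equal, whereas here $J$ preserves each of them and no such equality is forced.

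Next I would invoke von Neumann's description of the self-adjoint extensions of $A$ in $H$: to each unitary $V\colon\mathfrak N_i\to\mathfrak N_{-i}$ there corresponds the self-adjoint extension $A_V$ with $D(A_V)=D(A)\dotplus\{g+Vg:g\in\mathfrak N_i\}$ and $A_V(f+g+Vg)=Af+ig-iVg$. Since $J$ preserves each of $D(A)$, $\mathfrak N_i$, $\mathfrak N_{-i}$, a direct computation shows that $A_V$ is $J$-imaginary if and only if $VJ_+=J_-V$: the inclusion $JD(A_V)\subseteq D(A_V)$ is equivalent to this identity, and, granting it, the relation $A_VJ=-JA_V$ on $D(A_V)$ follows from $AJ=-JA$ on $D(A)$ together with the antilinearity of $J$. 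Finally, if the defect numbers coincide, pick orthonormal bases $(e_\alpha)$ of $\mathfrak N_i$ and $(h_\alpha)$ of $\mathfrak N_{-i}$ indexed by a common set, with $J_+e_\alpha=e_\alpha$ and $J_-h_\alpha=h_\alpha$ (every Hilbert space carrying a conjugation has such a ``real'' orthonormal basis), and let $V$ be the unitary determined by $Ve_\alpha=h_\alpha$. Then $VJ_+$ and $J_-V$ are antilinear and agree on $(e_\alpha)$, hence $VJ_+=J_-V$, and $\widehat A:=A_V$ is the desired $J$-imaginary self-adjoint extension in $H$.

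For assertion~1), set $\widetilde H:=H\oplus H$ with the conjugation $\widetilde J(x,y):=(Jx,Jy)$, which extends $J$ under the embedding $x\mapsto(x,0)$, and put $A':=A\oplus(-A)$, i.e. $D(A')=D(A)\oplus D(A)$ and $A'(u,v)=(Au,-Av)$. Then $A'$ is closed, densely defined and symmetric, it extends $A$, and it is $\widetilde J$-imaginary because $A'\widetilde J(u,v)=(AJu,-AJv)=(-JAu,JAv)=-\widetilde JA'(u,v)$ while $\widetilde JD(A')=D(A')$. Moreover $\ker\bigl((-A)^*\mp iE_H\bigr)=\ker(A^*\pm iE_H)$, so the defect subspaces of $A'$ are $\mathfrak N_i\oplus\mathfrak N_{-i}$ and $\mathfrak N_{-i}\oplus\mathfrak N_i$, which have the same dimension $\dim\mathfrak N_i+\dim\mathfrak N_{-i}$. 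Applying assertion~2) to $A'$ in $\widetilde H$ with the conjugation $\widetilde J$ produces a $\widetilde J$-imaginary self-adjoint operator $\widetilde A\supseteq A'\supseteq A$ in $\widetilde H$, which is what was required.

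The routine parts are the verifications that $A'$ is symmetric, closed and densely defined and the dimension count for its defect subspaces; the only genuinely conceptual ingredients are the observation that $J$ preserves (rather than interchanges) $\mathfrak N_i$ and $\mathfrak N_{-i}$ — which is exactly why the in-space extension needs the hypothesis on defect numbers — and the device $A'=A\oplus(-A)$, which restores the balance of defect numbers without destroying $J$-imaginarity. I do not anticipate a serious obstacle beyond this bookkeeping; alternatively one could carry out 2) on the level of Cayley transforms, where ``$J$-imaginary self-adjoint'' corresponds to ``$\widetilde J$-real unitary'' and one must extend a $J$-real partial isometry to a $\widetilde J$-real unitary on a larger space, but the domain-direct-sum argument above appears shorter.
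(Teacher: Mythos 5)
Your proposal is correct and follows essentially the same route as the paper: both hinge on the observation that $J$ leaves each defect subspace invariant (rather than interchanging them), build the required isometry between the defect subspaces from orthonormal bases of $J$-fixed vectors, and treat unequal defect numbers by passing to $A\oplus(-A)$ in $H\oplus H$ with the conjugation $J\oplus J$. The only difference is presentational: the paper carries out the in-space extension via the Cayley transform (forming the $J$-real unitary $U_i\oplus W$ and taking its inverse Cayley transform), whereas you verify the intertwining condition $VJ_+=J_-V$ directly in von Neumann's parametrization of self-adjoint extensions — an equivalent formulation of the same argument.
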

\begin{proof}
At first, suppose that the defect numbers of $A$ are equal.
Consider Cayley's transformation of $A$:
$$ U_z = U_z(A) = ( A - \overline{z} E_H )(A-zE_H)^{-1} = E_H + (z-\overline{z}) (A-zE_H)^{-1},\quad z\in \mathbb{C}. $$
Observe that
$$ J \mathcal{M}_z(A) = \mathcal{M}_{-\overline{z}}(A),\qquad z\in \mathbb{C}, $$
where $\mathcal{M}_\lambda(A) := (A-\lambda E_H) D(A)$, $\lambda\in \mathbb{C}$.
In particular, we see that
\begin{equation}
\label{f2_17}
J \mathcal{M}_{\pm i}(A) = \mathcal{M}_{\pm i}(A).
\end{equation}
Then
\begin{equation}
\label{f2_17_1}
J \mathcal{N}_{\pm i}(A) = \mathcal{N}_{\pm i}(A),
\end{equation}
where $\mathcal{N}_\lambda(A) := H\ominus \mathcal{M}_\lambda(A)$, $\lambda\in \mathbb{C}$.

Let $W$ be an arbitrary linear $J$-real isometric operator, which maps $\mathcal{N}_{i}(A)$ onto $\mathcal{N}_{-i}(A)$.
In particular, if $\mathfrak{A}_\pm = \{ f_k^\pm \}_{k=0}^\tau$, $0\leq\tau\leq +\infty$, is an orthonormal
basis in $\mathcal{N}_{\pm i}(A)$, corresponding to $J$ (i.e. $J f_k^\pm = f_k^\pm$), then we may set
$$ W \sum_{k=0}^\tau \alpha_k f_k^+ = \sum_{k=0}^\tau \alpha_k f_k^-,\qquad \alpha_k\in \mathbb{C}. $$
Then
$V := U_i\oplus W$ is a $J$-real unitary operator in $H$.
Observe that $\widetilde A := iE_H + 2i (V-E_H)^{-1}\supseteq A$, is self-adjoint and $J$-imaginary.

In the case of unequal defect numbers, we may consider an operator
$\mathcal{A} := A\oplus (-A)$ in a Hilbert space $\mathcal{H} := H\oplus H$ with a conjugation
$\mathcal{J} = J\oplus J$. The operator $\mathcal{A}$ is closed symmetric, $\mathcal{J}$-imaginary,
$\overline{ D(\mathcal{A}) } = \mathcal{H}$,
and
it has equal defect numbers. Thus, we may apply to $\mathcal{A}$ the already proved part.
\end{proof}

\begin{example} (A $J$-imaginary symmetric operator)
\label{e2_2}
Consider the usual space $H=l_2$ of square summable sequences of complex numbers $h = \left( \begin{array}{cccc} h_0\\
h_1\\
h_2\\
\vdots\end{array}\right)$.
A conjugation $J$ will be the following one: $Jh = \left( \begin{array}{cccc} \overline{h_0}\\
\overline{h_1}\\
\overline{h_2}\\
\vdots\end{array}\right)$.
An operator $A$ we shall define on a set of all finite vectors $\mathcal{F}$
(i.e. vectors which components are zeros except for a finite number)
by the following matrix multiplication:
$$ A h =
i \left( \begin{array}{ccccc}
0 & \alpha_0 & 0 & 0 & \ldots\\
-\alpha_0 & 0 & \alpha_1 & 0 & \ldots\\
0 & -\alpha_1 & 0 & \alpha_2 & \ldots\\
\vdots & \vdots & \vdots & \vdots & \ddots\end{array}
\right) h. $$
It is straightforward to check that $A$ is symmetric and $J$-imaginary.
Observe that $\overline{A}$ is $J$-imaginary, as well.
Applying Theorem~\ref{t2_2} to the operator $\overline{A}$ we conclude that
the operator $A$ has a self-adjoint $J$-imaginary extension in a Hilbert space
$\widetilde H\supseteq H$.

\end{example}

\begin{center}
{\large\bf A refined polar decomposition for $J$-unitary operators}
\end{center}
\begin{center}
{\bf S.M. Zagorodnyuk}
\end{center}

In this paper, we shall characterize the components of the polar decomposition for an arbitrary $J$-unitary operator
in a Hilbert space.
This characterization has a quite different structure as that for complex symmetric and complex skew-symmetric
operators.
It is also shown that for a $J$-imaginary closed symmetric operator in a Hilbert space
there exists a $J$-imaginary self-adjoint extension
in a possibly larger Hilbert space
(a linear operator $A$ in a Hilbert space $H$ is said to be $J$-imaginary if $f\in D(A)$ implies $Jf\in D(A)$ and
$AJf = -JAf$, where $J$ is a conjugation on $H$).

}


\end{document}